\title{Learning-Enabled Robust Control with Noisy Measurements}
\tikzstyle{block} = [draw, rectangle, line width=0.7mm]
\newcommand{\bmat}[1]{\begin{bmatrix} #1 \end{bmatrix}}
\newcommand{\R}{\mathbb{R}}
\newcommand{\x}{\mathbf{x}}
\renewcommand{\H}{\mathcal H}
\author{%
	\Name{Olle Kjellqvist} \Email{olle.kjellqvist@control.lth.se}\\
	\Name{Anders Rantzer} \Email{anders.rantzer@control.lth.se}\\
	\addr {Automatic Control LTH \\ Lund University \\ Box 118 \\ SE-221 00 Lund \\ Sweden}
}
\begin{document}

\maketitle

\begin{abstract}%
	We present a constructive approach to bounded $\ell_2$-gain adaptive control with noisy measurements for linear time-invariant scalar systems with uncertain parameters belonging to a finite set. The gain bound refers to the closed-loop system, including the learning procedure. The approach is based on forward dynamic programming to construct a finite-dimensional information state consisting of $\mathcal H_\infty$-observers paired with a recursively computed performance metric. We do not assume prior knowledge of a stabilizing controller.
\end{abstract}

\begin{keywords}%
	adaptive control, real-time learning
\end{keywords}

\section{Introduction}
	The great control engineer is lazy; her models are simplified and imperfect, the operating environment may be poorly controlled --- yet her solutions perform well. Robust control provides excellent tools to guarantee performance if the uncertainty is small~\cite{Zhou2008}. If the uncertainty is large, one can perform laborious system identification offline to reduce model uncertainty and synthesize a robust controller. An appealing alternative is to trade the engineering effort for a more sophisticated controller, particularly a learning-based component that improves controller performance as more data is collected. However, for such a controller to be implemented, it had better be robust to any prevalent unmodelled dynamics. Currently, there is considerable research interest in the boundary between machine learning, system identification, and adaptive control. For a review, see for example \cite{Matni2019}. Most of the studies concern stochastic uncertainty and disturbances and assume perfect state measurements. Recently, works connecting to worst-case disturbances have started to appear. For example, non-stochastic control was introduced for known systems with unknown cost functions in \cite{Agarwal2019} and extended to unknown dynamics and output feedback, under the assumption of bounded disturbances and prior knowledge of a stabilizing proportional feedback controller in \cite{Simchowitz2020}. In \cite{Dean2019} the authors leverage novel robustness results to ensure constraint satisfaction while actively exploring the system dynamics. In this contribution, the focus is on worst-case models for disturbances and uncertain parameters as discussed in \cite{Basar94}, \cite{Vinnicombe2004} and more recently in \cite{Rantzer2021}, but differ in that we consider output-feedback. See Figure~\ref{fig:prob} for an illustration of the considered problem. Unlike most recent contributions, the approach taken in this paper:
\begin{enumerate}
	\item does not assume prior knowledge of a stabilizing controller. In particular, we allow for uncertain systems that a linear controller cannot stabilize,
	\item assumes that the measurements are corrupted by additive noise,
	\item provides guarantees on the $\ell_2$-gain from disturbance and noise to state for the entire control duration.
\end{enumerate}
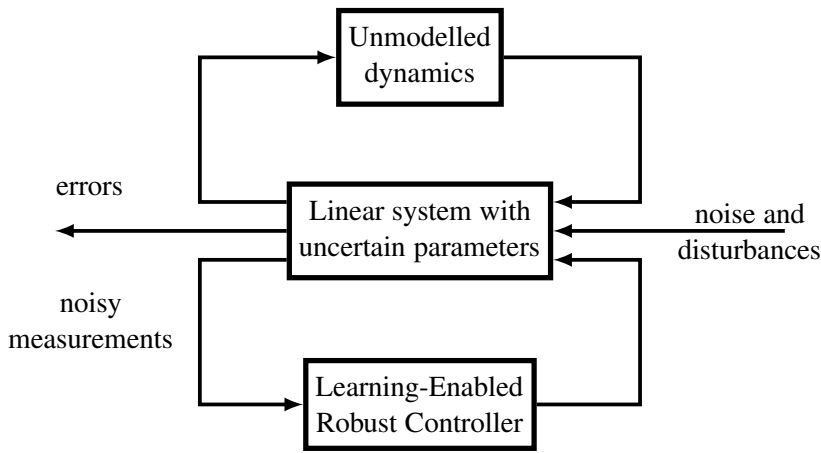
\begin{figure}
	\centering
	\begin{tikzpicture}[node distance = 6em, line width=0.5mm, inner ysep=6pt, >=latex]
	\node [block, align = center] (sys) {Linear system with \\ uncertain parameters};

	\node[block, above of = sys, align = center] (delta) {Unmodelled \\ dynamics};
	\node[block, below of = sys, align = center] (controller) {Learning-Enabled \\ Robust Controller};
	
	\node[left =8em of sys] (out) {};
	\node[right = 8em of sys]  (in) {};

	\node[coordinate, right = 3em of sys, yshift=-1em] (Cout) {};
	\node[coordinate, left =3em of sys, yshift=-1em] (Cin) {};

	\node[coordinate, right = 3em of sys, yshift=1em] (Dout) {};
	\node[coordinate, left =3em of sys, yshift=1em] (Din) {};
	
	\node[below left = 0.5em and 0.5em of Cin, align=center] () {noisy \\ measurements};

	\draw[->] (in) -- node[pos=0.15, align=center] () {noise and \\ disturbances} (sys);
	\draw[->] (sys) -- node[pos=0.85, label={errors}]() {} (out);
	\draw[->] ([yshift=-1em]sys.west) -- (Cin) |- (controller);
	\draw[->] ([yshift=1em]sys.west) -- (Din) |- (delta);
	\draw[<-] ([yshift=-1em]sys.east) -- (Cout) |- (controller);
	\draw[<-] ([yshift=1em]sys.east) -- (Dout) |- (delta);
\end{tikzpicture}
	\caption{For a finite set of linear time-invariant models, the Learning-Enabled Robust Controller minimizes the $\ell_2$-gain from noise and disturbances to errors for any realization of the unknown model parameters. This gain bound guarantees robustness to unmodelled dynamics.}
	\label{fig:prob}
\end{figure}
\subsection{Contributions and Outline}
We formalize the problem of finding a causal output-feedback controller with guaranteed finite $\ell_2$-gain stability that is agnostic to the realization of the system parameters in Section~\ref{sec:prob}. Section~\ref{sec:Full-state} is devoted to characterizing the Learning-Enabled Robust Controller in known or computable quantities. In Theorem~\ref{thm:full_state} we show that ensuring finite $\ell_2$-gain is equivalent to running one $\mathcal H_\infty$-observer for each feasible model, checking the sign of the associated cumulative cost and that each cumulative cost can be computed recursively. We show that it is necessary and sufficient to consider observer-based feedback in Theorem~\ref{thm:observer-based-feedback}. In other words, the history can be compressed to a finite number of recursively computable quantities, growing linearly in the number of feasible models. In Section~\ref{sec:Certainty}, we apply these results to synthesize a controller for an integrator with unknown input sign with a guaranteed bound on the $\ell_2$-gain from noise and disturbances to error.  All results in this paper are in discrete-time and for scalar systems, but sections \ref{sec:prob} and \ref{sec:Full-state} are readily extended to multivariable time-invariant systems.
\section{Notation}
The set of $n\times m$ matrices with real coefficients is denoted $\R^{n \times m}$. The transpose of a matrix $A$ is denoted $A^\top$. For a symmetric matrix $A \in \R^{n\times n}$ and a vector $x \in \R^n$ we use the expression $|x|^2_A$ as shorthand for $x^\top A x$. We write $A \prec (\preceq)\ 0$ to say that $A$ is positive (semi)definite. We refer to the value of a signal $w$ at time $t$ as $w(t)$. The space of square-summable sequences from $\{T_0,T_0 + 1, \ldots, T_f\}$ taking values in $\R$ is denoted  $\ell_2[T_0, T_f]$. For a set $\mathcal S$, we let $\#(\mathcal S)$ be the cardinality. 
\section{Learning-Enabled Control with Guaranteed Finite $\ell_2$ Gain}
\label{sec:prob}
	Given a positive quantity $\gamma > 0$ and a finite set of feasible models $\mathcal M \subset \R^3$, we concern ourselves with the uncertain linear system
\begin{equation}
\begin{aligned}
	x(t+1) & = ax(t) + bu(t) + w(t), \quad x(0) = x_0\\
	y(t) & = cx(t) + v(t),\quad t \geq 0
\end{aligned}
        \label{eq:sys}
\end{equation}
	where the control signal $u(t) \in \R $ is generated by a causal output-feedback control policy
\begin{equation}
	u(t) = \mu_t\left(y(0), y(1), \ldots, y(t)\right).
	\label{eq:mu}
\end{equation}

In \eqref{eq:sys}, $x(t) \in \R$ is the state, $y(t) \in \R$ is the measurement, the model $M:= (a, b, c)$ is unknown but belongs to $\mathcal M$. The noise $v$ and disturbances $w$ satisfy $w,v \in \ell_2([0, T])$ for all $T \geq 0$. We are interested in control that makes the closed-loop system finite gain, with gain from $(w,v)$ to $x$ bounded above by $\gamma$. That is,

\begin{equation}
	\alpha(T) :=\sum_{\tau \leq T+1} x(\tau)^2 - \gamma^2\sum_{\tau \leq T}w(\tau)^2 - \gamma^2\sum_{\tau \leq T+1}v(\tau)^2 - P_Mx(0)^2 \leq 0
        \label{eq:finite_gain_cond}
\end{equation}
must hold for all $T \geq 0$, any admissible disturbances, initial state and the possible realizations $M$ of \eqref{eq:sys}. $P_M$ quantifies prior information on the initial state and is taken as a positive solution to the Riccati equation
\begin{equation}
	P_M = \left(a^2\left(P_M + \gamma^2c^2 - 1\right)^{-1} + \gamma^{-2} \right)^{-1}.
	\label{eq:riccati}
\end{equation}

In this article, we explicitly construct controllers satisfying the finite-gain property and give conditions under which such controllers exist for the case when $c = 1$ and $b = \pm 1$.
\begin{remark}
	The cases $b = -1$ and $b = 1$ cannot be simultaneously stabilized by a static feedback controller when $a \geq 1$
\end{remark}

\begin{remark}
	$P_M$ could be any positive quantity. Our choice leads to stationary observer dynamics, simplifying the coming sections.
\end{remark}

\section{An information-state condition}
\label{sec:Full-state}
In this section we will apply a slight modification to the $\H_\infty$-observer from~\cite{Basar95} to bound \eqref{eq:finite_gain_cond} in a way which leads itself to recursive computation. We need the following lemma:
\begin{lemma}[Past cost]
        \label{lemma:past_cost}
	Given a known model $M = (a,b,c)$, a positive quantity $\gamma$, assume that the Riccati equation \eqref{eq:riccati} has a positive solution $P_M$. For fixed $u \in \ell_2([0, t]),\ y \in \ell_2[0,t])$ and $x(t+1)\in \R$, we have that
	\begin{multline}  
		\sup_{w,v\in \ell_2[0,t], x_0 \in \R}\left\{\sum_{\tau \leq t} x(\tau)^2 - \gamma^2\sum_{\tau \leq t}\left(w(t)^2 + v(t)^2\right) - Px(0)^2:\ \text{subject to \eqref{eq:sys}}\right\}\\
		= -P_M(x(t+1) - \hat x_M(t+1))^2 + l_M(t+1).
		\label{eq:history}
	\end{multline}
	The state observer $\hat x_M(t)$, and the past cost $l_M(t)$ are defined by the recursion
        \begin{align}
		K_M & = \frac{\gamma^2c_M^2}{P_M + \gamma^2c^2 - 1}, \quad \hat w_M(t) = \frac{\hat x_M(t)}{P_M + \gamma^2c^2 - 1},\nonumber \\
		\hat x_M(t+1) & = a\hat x(t) + bu(t) + K_M\left(y(t) - c\hat x(t) \right) + \hat w_M(t), \quad \hat x_M(0) = 0, \label{eq:sep}\\
		l_M(t+1) & = l_M(t) - P_M\hat x_M(t)^2 - \gamma^2(y_t)^2 + \frac{\left(P_M\hat x_M(t) + \gamma^2 cy(t)\right)^2}{P_M + \gamma^2c^2 - 1}, \quad  l_M(0) = 0.\nonumber
        \end{align}
\end{lemma}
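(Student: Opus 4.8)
The plan is to prove \eqref{eq:history} by induction on $t$, viewing the supremum as a \emph{cost-to-arrive} value function whose argument is the (fixed) future state. Concretely, for $\xi \in \R$ define
\begin{equation*}
V_t(\xi) := \sup_{w,v,x_0}\left\{\sum_{\tau \leq t} x(\tau)^2 - \gamma^2\sum_{\tau \leq t}\left(w(\tau)^2 + v(\tau)^2\right) - P_M x(0)^2 :\ x(t+1) = \xi,\ \text{subject to \eqref{eq:sys}}\right\},
\end{equation*}
so that the left-hand side of \eqref{eq:history} is exactly $V_t(x(t+1))$ and the goal becomes $V_t(\xi) = -P_M(\xi - \hat x_M(t+1))^2 + l_M(t+1)$. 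Since $y$ and $u$ are fixed, choosing the admissible triple $(w,v,x_0)$ is equivalent to choosing the trajectory $x(0),\dots,x(t)$: the output equation forces $v(\tau) = y(\tau) - cx(\tau)$, and the terminal constraint forces $w(t) = \xi - ax(t) - bu(t)$. This identification is what collapses the infinite-dimensional supremum into a scalar recursion.

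Next I would establish the forward dynamic-programming recursion
\begin{equation*}
V_t(\xi) = \sup_{\eta \in \R}\left[\, V_{t-1}(\eta) + \eta^2 - \gamma^2\left(\xi - a\eta - bu(t)\right)^2 - \gamma^2\left(y(t) - c\eta\right)^2 \,\right],
\end{equation*}
with base case $V_{-1}(\eta) = -P_M\eta^2$ (empty sums plus the prior term), which matches $\hat x_M(0) = 0$ and $l_M(0) = 0$. Substituting the induction hypothesis $V_{t-1}(\eta) = -P_M(\eta - \hat x_M(t))^2 + l_M(t)$ gives a concave quadratic in $\eta$ that I would maximize by completing the square in two stages. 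The first stage (measurement update) collects the terms independent of $\xi$, writing $-P_M(\eta - \hat x_M(t))^2 + \eta^2 - \gamma^2(y(t) - c\eta)^2 = -\bar P(\eta - \bar x)^2 + \bar l$ with $\bar P = P_M + \gamma^2 c^2 - 1$; here $\bar x$ is the measurement-corrected mean feeding \eqref{eq:sep} and $\bar l$ is exactly the increment of $l_M$, so that $l_M(t+1) = l_M(t) + \bar l$ follows at once. The second stage (time update) maximizes $-\bar P(\eta - \bar x)^2 - \gamma^2(\xi - a\eta - bu(t))^2$ over $\eta$ using the elementary identity $\sup_\eta\left[-P_1(\eta-m_1)^2 - P_2(\eta-m_2)^2\right] = -\tfrac{P_1P_2}{P_1+P_2}(m_1-m_2)^2$, which yields $-\tfrac{\bar P\gamma^2}{\bar P + \gamma^2 a^2}(\xi - bu(t) - a\bar x)^2$. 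I would then read off the claim: the coefficient $\tfrac{\bar P\gamma^2}{\bar P + \gamma^2 a^2}$ equals $P_M$ precisely because $P_M$ solves \eqref{eq:riccati}, and $\hat x_M(t+1) = bu(t) + a\bar x$ reproduces the observer recursion \eqref{eq:sep}, closing the induction.

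The hard part is not the bookkeeping but ensuring that each inner supremum is finite and attained, i.e. that the relevant quadratic is strictly concave. Both stages hinge on $\bar P = P_M + \gamma^2 c^2 - 1 > 0$ (equivalently on the Riccati denominator $\bar P + \gamma^2 a^2 > 0$): if this fails, the supremum over $\eta$ is $+\infty$ and \eqref{eq:history} is vacuous. Rewriting \eqref{eq:riccati} as $P_M(\bar P + \gamma^2 a^2) = \bar P\gamma^2$ shows that $\bar P$ and $\bar P + \gamma^2 a^2$ share the sign of $P_M > 0$, but this still leaves two admissible branches, so I would have to argue that the positive solution postulated in the hypothesis is the stabilizing one with $\bar P > 0$ rather than the spurious branch with $\bar P < -\gamma^2 a^2$. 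Pinning down this sign — the point at which the existence of a positive Riccati solution is genuinely used, and which also guarantees the stationary observer \eqref{eq:sep} is well defined — is the delicate step; once it is in place, the two-stage completion of squares makes the recursion stationary and the induction goes through.
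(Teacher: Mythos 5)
Your route is genuinely different from the paper's. The paper does not rederive the result: it proves Lemma~\ref{lemma:past_cost} by reduction to the forward dynamic-programming machinery in \cite{Basar95} (their equations (6.1)--(6.2) and Lemma 6.2), with the only real work being notational translation and the bookkeeping needed to identify $l_M(t+1)$ with the reference's ``terms independent of $\xi$ and $w$.'' You instead give a self-contained derivation: cost-to-arrive function $V_t$, forward DP recursion, and a two-stage completion of squares. Your computations check out: with $\bar P := P_M + \gamma^2c^2 - 1$, the measurement-update residual is exactly the increment in the $l_M$-recursion, the time update yields the coefficient $\bar P\gamma^2/(\bar P + \gamma^2 a^2)$, which equals $P_M$ precisely by \eqref{eq:riccati}, and $\hat x_M(t+1) = a\bar x + bu(t)$ is the observer in the form $\hat a_M\hat x_M(t) + bu(t) + \hat g_M y(t)$ --- i.e., the ``equivalent form'' stated in the paper's remark and used in Theorem~\ref{thm:full_state} (note that \eqref{eq:sep} as printed contains typos, so matching the equivalent form is the right target). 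Your version is more transparent for the reader; the paper's is shorter but forces the reader to chase notation through the reference, and it inherits the reference's regularity conditions without restating them.

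The one caveat concerns the sign issue you flag at the end, and here your instinct is right but your proposed resolution cannot work: positivity of $P_M$ alone does \emph{not} imply $\bar P > 0$, so there is nothing to ``pin down'' from the stated hypotheses. From $P_M(\bar P + \gamma^2a^2) = \bar P\gamma^2$ you correctly conclude that $\bar P$ and $\bar P + \gamma^2a^2$ share a sign, but the negative branch can genuinely occur: take $c = 0$, $a = 1$, $\gamma = 1/2$; then \eqref{eq:riccati} reads $4P_M^2 - 4P_M + 1 = 0$, so $P_M = 1/2 > 0$ is a positive solution, yet $\bar P = -1/2$ and $\bar P + \gamma^2a^2 = -1/4 < 0$. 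In this case the supremum in \eqref{eq:history} is already $+\infty$ at $t=0$ (the coefficient of $x(0)^2$ is $1 - P_M - \gamma^2a^2 = 1/4 > 0$), so the lemma's conclusion itself fails --- the gap is in the lemma's hypotheses, not in your induction. The correct fix is to add the concavity hypothesis $P_M + \gamma^2c^2 - 1 > 0$ (this is what the conditions in \cite{Basar95} amount to, and what the paper implicitly inherits by citation). In the regime where the paper actually uses the lemma this is automatic: there $c = 1$ and Theorem~\ref{thm:full_state} anyway requires $P_M \geq 1$, giving $\bar P \geq \gamma^2 > 0$. With that hypothesis added, your two-stage argument is complete and correct.
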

\begin{remark}
	The observer form \eqref{eq:sep} makes sense for linear systems where we can design a state-feedback controller and observer separately and then join them together using the separation principle in~\cite{Basar95}. The assumptions for the separation principle are not satisfied in our case, so we find it simpler to use the equivalent form 
	\[
		\hat x_M(t+1) = \hat a_M x(t) + bu(t) + \hat g_M y(t),
	\]
	where $\hat a_M = aP_M / (P_M + \gamma^2c^2 - 1)$ and $\hat g_M = \gamma^2 a c / (P_M + \gamma^2c^2 - 1)$.
\end{remark}
\begin{proof}[Lemma~\ref{lemma:past_cost}]
	The system is equivalent to (6.1) and (6.2) in \cite[p.~243]{Basar95} but with $D_k = \bmat{I & 0}$ and $E_k = \bmat{0 & I}$. Note that the term $-P_Mx(0)^2$ in \eqref{eq:history} ensures that $P_{k+1} = P_k = \ldots = P_M$, i.e. stationarity.  Explicitly computing $l_M(t)$ requires some extra bookkeeping; in $6.35$ the \emph{terms independent of $\xi$ and $w$} is equivalent to $\gamma^2|y_t|^2_{(HH^\top)^{-1}} + |\hat x_t|^2_{P_t} - |u_t|^2_R - l_t$, the notational differences are $(HH^\top) \to N$, $P_t \to K_t$ and $l_t \to c_t$. After application of Lemma 6.2 on $p.~259$ we identify
        \[
                m_k = -|P_t\hat x_t + \gamma^2C^\top(HH^\top)^{-1}y_t|^2_{(P_t + \gamma^2C^\top(HH^\top)^{-1}C - Q)^{-1}} + \gamma^2|y_t|^2_{(HH^\top)^-1} + |\hat x_t|^2_{P_t} - |u_t|^2_R - l_t
        \]
	and conclude $l_M(t+1) = -m_k$.
\end{proof}
	Lemma~\ref{lemma:past_cost} lets us express the worst-case accumulated cost compatible with the dynamics as a function of the past trajectory $(u,y)$ and the next state $x(t+1)$, if the dynamics $M$ of the system \eqref{eq:sys} are known. As $x(t+1)$ changes, so does the set of trajectories $w,v$ that are compatible with $x(t+1)$. In particular, the entire sequence of a maximizing trajectory will change as $x(t+1)$ is varied. With that in mind, it is remarkable that the effect to the accumulated cost is captured completely by the term $-P\left(x(t+1) - \hat x(t+1)^2\right)$. The second term $l(t+1)$ contains the terms of the cost that depend only on past inputs and outputs and is independent of $x(t+1)$.

We will study the value of the left-hand side of \eqref{eq:finite_gain_cond} for each model separately. Define for $M = (a, b, c) \in \mathcal M$, $y\in \ell_2[0,t]$ and an arbitrary output-feedback control policy $\mu$ the quantities
\begin{equation}
	\alpha_M(t) := \sup_{w,v \in \ell_2[0,t], x_0 \in \R} \left \{ \alpha(t): (a,b,c) = M, \text{subject to \eqref{eq:sys} and \eqref{eq:mu}}\right \}
	\label{eq:alpha}
\end{equation}

Then $\max_M\alpha_M(t)$ is the largest possible value of \eqref{eq:finite_gain_cond} at time $t$. In the following theorem, we use Lemma~\ref{lemma:past_cost} to express $\alpha_M$ recursively and construct equivalent conditions using computable quantities.
\begin{theorem}[Information-state condition]
        \label{thm:full_state}
	Given a causal output-feedback control policy$\mu$, a positive quantity $\gamma$, and an uncertainty set $\mathcal M$. Assume that for all $(a, b, c) = M \in \mathcal M$ the Riccati equation
	\begin{equation}
                P_M = \left( \frac{a^2}{P_M + \gamma^2c^2 - 1} + \gamma^{-2} \right)^{-1}
		\label{eq:Riccati}
	\end{equation}
	a positive solution $P_M$ and let
        \[
                \hat a_M = \frac{aP_M}{P_M + \gamma^2c^2 - 1},\qquad \hat g_M = \gamma^2\frac{ac}{P_M + \gamma^2c^2 -1}.
        \]
	Further let
	\begin{align}
                \hat x_M(t+1) & = \hat a_M\hat x_M(t) + bu(t) + \hat g_My(t),\ \hat x_M(0) = 0,
		\label{eq:xhati} \\
		l_M(t+1) & = l_M(t) - P_M\hat x_M(t)^2 - \gamma^2y(t)^2 + \frac{(P_M\hat x_M(t) + \gamma^2c y(t))^2}{P_M + \gamma^2c^2 - 1},\quad l_M(0) = 0.
                \label{eq:li}
        \end{align}
	Then the closed-loop system \eqref{eq:sys}, \eqref{eq:mu} with control $\mu$ is finite gain for any realization $M\in \mathcal M$ if and only if $l_M(t+1) \leq 0$ holds for all $M \in \mathcal{M}$,  $t\geq 0$ and $y \in \ell_2([0, t])$. If $P_M < 1$ for some $M$, $\gamma$ is not an upper bound of the $\ell_2$-gain from disturbance to error.
\end{theorem}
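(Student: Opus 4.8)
The plan is to prove the stated equivalence by reducing the finite-gain requirement, one model at a time, to a pure sign test on $l_M$, and I expect to arrive at the exact identity $\alpha_M(t)=\sup_{y}l_M(t+2)$, from which both implications fall out together. First I would note that, since each realization is handled separately and the supremum in \eqref{eq:alpha} already ranges over all admissible $w,v,x_0$, the closed loop is finite gain precisely when $\alpha_M(t)\le 0$ for every $M\in\mathcal M$ and every $t\ge 0$. The whole task is therefore to evaluate $\alpha_M(t)$ in terms of the recursively computable $\hat x_M$ and $l_M$.

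Next I would align the window of $\alpha(t)$ with that of Lemma~\ref{lemma:past_cost}. Comparing the sums, $\alpha(t)$ exceeds the lemma's objective at time $t$ by exactly $x(t+1)^2-\gamma^2v(t+1)^2$; since $v(t+1)$ enters only through $-\gamma^2v(t+1)^2$ (it corrupts $y(t+1)$, which influences only future inputs), the supremum sets $v(t+1)=0$. I would then split the remaining supremum over $(w,v,x_0)$ by the observable data it induces: for each output sequence $y\in\ell_2[0,t]$ (every such $y$ is realizable, and it fixes $u=\mu(y)$) and each terminal value $x(t+1)\in\R$, Lemma~\ref{lemma:past_cost} evaluates the inner supremum as $-P_M(x(t+1)-\hat x_M(t+1))^2+l_M(t+1)$. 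Hence
\[
\alpha_M(t)=\sup_{y\in\ell_2[0,t]}\Big[\,l_M(t+1)+\sup_{x(t+1)\in\R}\big(x(t+1)^2-P_M(x(t+1)-\hat x_M(t+1))^2\big)\Big].
\]

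The terminal maximization is where $P_M$ versus $1$ becomes decisive. The inner quadratic has leading coefficient $1-P_M$: when $P_M>1$ it is concave with finite maximum $\tfrac{P_M}{P_M-1}\hat x_M(t+1)^2$, whereas when $P_M<1$ it is unbounded above, so $\alpha_M(t)=+\infty$ independently of the controller; intuitively the terminal error $x(t+1)^2$ can be inflated by a large last-step disturbance whose effective price $P_M$ is below one, which proves the final sentence that $\gamma$ fails as a gain bound. Assuming $P_M>1$, the key remaining step is an identity: maximizing $l_M(t+2)$ over the free next measurement $y(t+1)$ in the recursion \eqref{eq:li} is again a concave quadratic, and a direct computation using $P_M+\gamma^2c^2-1=(P_M-1)+\gamma^2c^2$ yields $\sup_{y(t+1)}l_M(t+2)=l_M(t+1)+\tfrac{P_M}{P_M-1}\hat x_M(t+1)^2$. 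The two maximizations thus produce the \emph{same} term, so $\alpha_M(t)=\sup_{y\in\ell_2[0,t+1]}l_M(t+2)$. Finite gain is then equivalent to $l_M(s)\le 0$ for all $s\ge 2$ and all $y$; checking that $l_M(1)\le 0$ holds automatically when $P_M>1$ promotes this to the stated condition $l_M(t+1)\le 0$ for all $t\ge 0$.

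I expect the main obstacle to be the bookkeeping in the two reductions rather than any hard inequality: justifying that the supremum over disturbances factors cleanly through the achievable pairs $(y,x(t+1))$ so Lemma~\ref{lemma:past_cost} applies verbatim, and tracking the one-step index shift so that the terminal-state term and the next-measurement term align. The conceptual crux, and the step I would verify most carefully, is exactly that these two separate maximizations return the identical quantity $\tfrac{P_M}{P_M-1}\hat x_M(t+1)^2$, since this coincidence is what collapses the finite-gain inequality to a sign condition on $l_M$.
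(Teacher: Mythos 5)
Your proposal is correct and follows essentially the same route as the paper's own proof: reduce the finite-gain requirement to the per-model condition $\alpha_M\le 0$, invoke Lemma~\ref{lemma:past_cost} to compress the supremum over $(w,v,x_0)$ into $(\hat x_M, l_M, P_M)$, and finish with a terminal concave-quadratic maximization that reproduces exactly one step of the recursion \eqref{eq:li}, with $P_M<1$ giving unboundedness from above. The only difference is bookkeeping of the terminal terms --- the paper parametrizes them by the last measurement (substituting $v(t)=y(t)-cx(t)$ and maximizing over $x(t)$, reading off $\alpha_M(t)=l_M(t+1)$ directly), whereas you set $v(t+1)=0$, maximize freely over $x(t+1)$ to get $\frac{P_M}{P_M-1}\hat x_M(t+1)^2$, and then identify this with $\sup_{y(t+1)}l_M(t+2)$, landing one index later; this forces you to check the base case $l_M(1)\le 0$ (automatic when $P_M\ge 1$), which you do, and in fact quietly repairs an off-by-one in the paper's displayed computation, where the quantity maximized is $\alpha(t-1)$ rather than $\alpha(t)$ as defined in \eqref{eq:finite_gain_cond}.
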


\begin{proof}
	Let $\alpha_M(t)$ be defined as in \eqref{eq:alpha}. Then \eqref{eq:finite_gain_cond} holds for all $(w,v, x_0)$, $M \in \mathcal{M}$ and $T$ if and only if $\alpha_M(T) \leq 0$ for all $M\in \mathcal M$ and $y\in \ell_2[0, T]$. We now apply Lemma~\ref{lemma:past_cost} to express $\alpha_M(t)$ in the known quantities $\hat x_M(t)$, $P_M$ and $l_M(t)$\footnote{We let subscript $M$ denote quantities using $(a, b, c) = M$.}:
\[
\begin{aligned}
	\alpha_M(t) & = \sup_{x(t),v(t)\in \R}\sup_{w, v\in \ell_2[0,t], x_0\in \R} \Bigg\{x(t)^2 - \gamma^2v(t)^2 + \sum_{\tau \leq t-1}x(\tau)^2 - \gamma^2 \sum_{\tau \leq t-1}\left( w(t)^2 + v(t)^2 \right) \\
		& \quad : x(t+1) = ax(t) + bu(t) +w(t),\ y(t) = cx(t) + v(t),\  (a, b, c) = M\Bigg\}\\
	& = \sup_{x\in \R, v\in \R} \left\{x^2 -\gamma^2v^2 - P_M\left(x - \hat x_M(t)\right)^2  + l_M(t)\right\} \\
                                & = \left(P_M\hat x_M(t) + \gamma^2cy(t)\right)^2/(P_M + \gamma^2c^2 - 1) - P_M\hat x_M^2(t) - \gamma^2y(t)^2 + l_M(t) \\
                                & = l_M(t+1).
\end{aligned}
\]

	Finally, note that if for some $M$, $P_M < 1$, then $l_M(t+1)$ is strictly convex in $y(t)$ and thus unbounded from above.
\end{proof}

From Theorem~\ref{thm:full_state} we see that the observer states $\hat x_M(t)$ and cumulative objectives $l_M(t+1)$ contain the information necessary and sufficient to evaluate the finite-gain condition \eqref{eq:finite_gain_cond}. In other words, we can tell everything we need about the current state of affairs by running one $\H_\infty$ observer and computing $l_M$(t+1) for each model $M$ in parallel; \emph{but is it sufficient to consider observer-based feedback for control? If so, is it also necessary?.}  the next theorem, we show that the observer states and cumulative objectives contain precisely the information required to synthesize a finite-gain control policy.

\begin{theorem}[Observer-based feedback]
	\label{thm:observer-based-feedback}
	Given a positive quantity $\gamma > 0 $ and an uncertainty set $\mathcal M \in \R^3$. The following are logically equivalent.
	\begin{enumerate}
		\item[(i)] There exists a causal output-feedback control policy $\mu^\star$ such that the closed-loop system \eqref{eq:sys} and \eqref{eq:mu} is finite-gain.
		\item[(ii)] There exist observers $(\hat x_M, l_M)$ for each model $m \in \mathcal M$ generated by \eqref{eq:xhati}, \eqref{eq:li} and an observer-based control policy $\eta^\star$ 
			\[
				u(t) = \eta^\star\left\{\left(\hat x_M(t), l_M(t+1), y(t)\right): m\in \mathcal M \right\},
			\]
			such that $l_M(t+1) \leq 0$ for all $m \in \mathcal M$, $y \in \ell_2[0, t]$ and $t\geq 0$.
	\end{enumerate}
	
	If $\eta^\star$ satisfies (ii), the following control policy satisfies (i):
	\begin{equation}
		\mu^\star_t\left( y(0), y(1) \ldots, y(t) \right) =\eta^\star\left\{\left(\hat x_M(t), l_M(t+1), y(t)\right): m\in \mathcal M \right\} 
		\label{eq:mueta}
	\end{equation}
\end{theorem}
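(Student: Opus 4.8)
The plan is to establish the two implications of the logical equivalence separately, with the bulk of the work lying in showing that (i) implies the existence of an observer-based policy satisfying (ii). The reverse direction, (ii) implies (i), is essentially delivered by the construction \eqref{eq:mueta} together with Theorem~\ref{thm:full_state}: if $\eta^\star$ guarantees $l_M(t+1)\le 0$ for all $M$, $t$, and admissible $y$, then defining $\mu^\star$ as the composition of $\eta^\star$ with the observer recursions \eqref{eq:xhati}, \eqref{eq:li} yields a genuine causal output-feedback policy, since $\hat x_M(t)$ and $l_M(t+1)$ are computable from $y(0),\dots,y(t)$ alone. Theorem~\ref{thm:full_state} then converts $l_M(t+1)\le 0$ into the finite-gain property, establishing (i).

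For the forward direction, I would argue that the observer bank $\{(\hat x_M,l_M)\}_{M\in\mathcal M}$ is a \emph{sufficient statistic} for the control problem. The key observation is that the recursions \eqref{eq:xhati} and \eqref{eq:li} are driven only by $(u(t),y(t))$, and by Theorem~\ref{thm:full_state} the quantity $l_M(t+1)$ \emph{equals} $\alpha_M(t)$ — the worst-case value of the finite-gain functional over all disturbance and initial-state realizations consistent with the observed data and model $M$. Thus the information-state tuple captures exactly the residual constraint that any future control decision must respect. Given a policy $\mu^\star$ satisfying (i), I would show that its action at each time can be rewritten as a function of the information state. The cleanest route is to note that two output histories producing identical information states $\{(\hat x_M(t),l_M(t+1))\}_M$ are indistinguishable from the standpoint of the worst-case cost-to-go, so without loss of generality one may define $\eta^\star$ to agree with $\mu^\star$ on a representative history and verify that the induced closed loop still satisfies $l_M(t+1)\le 0$ via Theorem~\ref{thm:full_state}.

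The main obstacle I anticipate is making the ``sufficient statistic'' claim precise, in particular handling the dependence of $l_M(t+1)$ on the current measurement $y(t)$ as opposed to only the past. Because \eqref{eq:li} updates $l_M$ using $y(t)$, the admissible control $u(t)$ and the term $l_M(t+1)$ appearing in $\eta^\star$ are coupled through the same time step; I would need to confirm that $u(t)$ depends on $y(t)$ only through the exposed arguments of $\eta^\star$ and that no additional information from the raw history leaks into the worst-case analysis. Establishing that the map from output histories to information states, while not injective, preserves the worst-case cost-to-go — so that a policy defined on information states reproduces the guarantees of any history-dependent policy — is the delicate step, and a dynamic-programming or backward-induction argument over $t$ is the natural tool to close it.
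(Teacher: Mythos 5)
Your proposal is correct and takes essentially the same route as the paper: the (ii)$\Rightarrow$(i) direction via composition with the observer recursions and Theorem~\ref{thm:full_state}, and the (i)$\Rightarrow$(ii) direction by defining $\eta^\star$ to agree with $\mu^\star$ on a representative history generating the given information state, using that Theorem~\ref{thm:full_state} identifies $l_M(t+1)$ with $\alpha_M(T)\le 0$. For your ``delicate step,'' the paper closes the argument with a \emph{forward} induction on $t$---maintaining nonemptiness of the set of $\mu^\star$-consistent output histories that generate the current information state, so that $\eta^\star$ stays well defined and its guarantees propagate---rather than the backward induction you suggest, which would not apply on the unbounded horizon.
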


\begin{remark}
	By compressing the past trajectory to a finite set of cumulative performance quantities $l_M$, policies of this type learns the actual dynamics of the system as time goes on. This leads to a kind of multi-observer controller. The architecture is illustrated in~\ref{fig:observer-based-feedback}.
\end{remark}

\begin{figure}
	\centering
	\begin{tikzpicture}[line width=0.5mm, inner ysep=6pt, >=latex]
	
	\node [draw, rectangle, minimum height=9em, line width=0.7mm, align = center] (C) {Observer-based \\ Controller: \\ $\eta$};
	
	\node[block, left = 9em of C, minimum height=3em] (obs2) {Observer 2};
	\node[block, minimum height=3em, above= 1.5em of obs2] (obs1) {Observer 1};
	\node[below = -.5em of obs2] (dots) {$\vdots$};
	\node[block, below= 0em of dots, minimum height = 3em] (obsK) {Observer $K$};

	\node[coordinate, right = 6em of obs1] (bend1) {};
	\node[coordinate, right = 6em of obsK] (bendK) {};

	\node[left = 9em of obs2] (in) {$y$};
	\node[right = 6em of C, color=black!10!blue] (out) {$u$};
	\node[coordinate, right = 6em of in] (split) {};
	\node[coordinate, above = 2em of obs1] (ff) {};

	\draw[->, label distance=-0.5em] ([yshift=1em]obs1.east) -- node[label=below:{$(\hat x_1, l_1)$}] {} ([yshift=1em]bend1.center) |- ([yshift=3.5em]C.west);
	\draw[<-, black!10!blue, label distance=-0.5em] ([yshift=-1em]obs1.east) -- node[xshift=0.5em, color = black!10!blue, label=below:{$u$}] {} ([yshift=-1em, xshift=-1em]bend1.center) |- ([yshift=2.5em]C.west);
	\draw[->, label distance=-0.5em] ([yshift=1em]obs2.east) -- node[pos = 0.35, label=below:{$(\hat x_2, l_2)$}] {} ([yshift=1em]C.west);
	\draw[<-, label distance=-0.5em, black!10!blue] ([yshift=-1em]obs2.east) -- node[pos = 0.35,color = black!10!blue, label=below:{$u$}] {}  ([yshift=-1em]C.west);

	\draw[->, label distance=-0.5em] ([yshift=1em]obsK.east) -- node[xshift=0.5em, label=below:{$(\hat x_K, l_K)$}] {} ([yshift=1em, xshift=-1em]bendK.center) |- ([yshift=-2.5em]C.west);
	\draw[<-, black!10!blue, label distance=-0.5em] ([yshift=-1em]obsK.east) -- node[color=black!10!blue, label=below:{$u$}] {} ([yshift=-1em]bendK.center) |- ([yshift=-3.5em]C.west);

	\draw[->] (in) -- (obs2);
	\draw[->] (split) |- (obs1);
	\draw[->] (split) |- (obsK);
	\draw[->] (split) |- (ff) -| (C);
	\draw[->, black!10!blue] (C) -- (out);

	\coordinate(b1) at ($(in.east |- ff.north) + (2em, 2em)$);
	\coordinate(b2) at ($(out.west |- obsK.south) + (-2em, -2em)$);
	\draw[rounded corners, draw=black!10!blue, dashed] (b1) rectangle (b2);
	\node[below =1em of C, align=center, color=black!10!blue] () {Causal output-feedback \\ controller: $\mu$};

\end{tikzpicture}
	\caption{Illustration of the controller architecture in Theorem~\ref{thm:observer-based-feedback} for uncertainty sets consisting of $K$ linear models. The controller $\eta$ only considers the current state of the observers.}
	\label{fig:observer-based-feedback}
\end{figure}
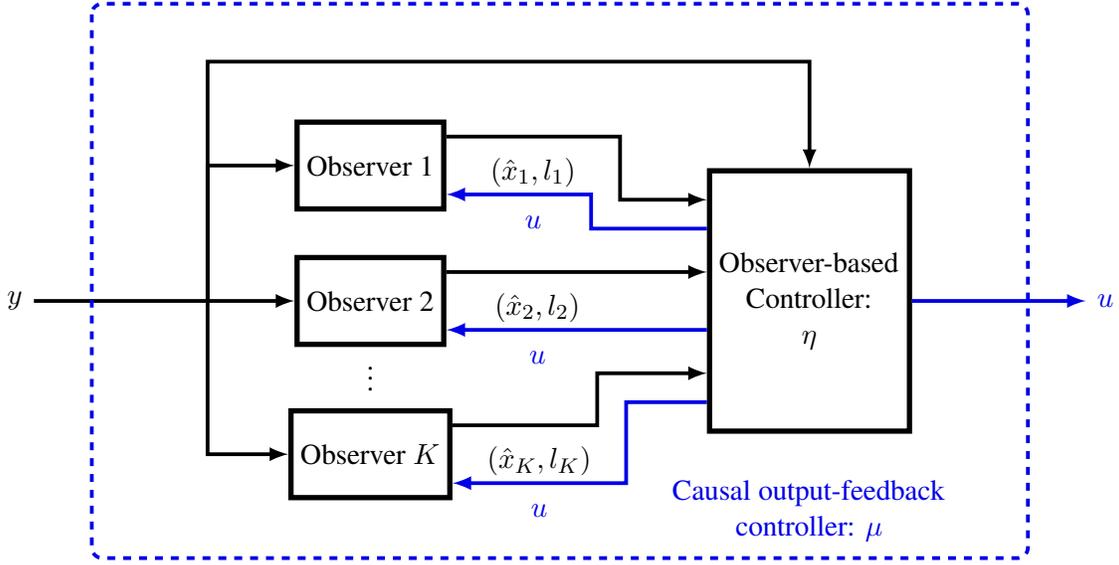
\begin{proof}{Theorem~\ref{thm:observer-based-feedback}}
	\emph{(ii) implies (i)} follows from that $\hat x_M(t), l_M(t+1)$ depend causally on $y$, thus the observer-based control policy is a special case of causal feedback control policies. By assumption, $l_M(T) \leq 0$ for all $T$, $M$ and $y\in \ell_2[0, T]$ for the controller \eqref{eq:mueta}, which we know implies that the system is finite gain by Theorem~\ref{thm:full_state}.

	\emph{(i) implies (ii)}: Assume that the controller $\mu^\star$ fulfills (i). By the construction of \eqref{eq:finite_gain_cond} the Riccati equations have positive solutions $P_M$, therefore the assumptions of Theorem~\ref{thm:full_state} are fulfilled and there exist observers $\hat x_M$ and $l_M$ generated by \eqref{eq:xhati} and \eqref{eq:li}. Define the set of feasible generating trajectories given observer states $\hat x_M(t)$, $l(t)$ and current measurement $y(t)$:
	\begin{multline*}
		\mathcal T\left \{(\hat x_M(t), \hat l_M(t+1), y(t)) : M \in \mathcal M \right \} := \Big \{ (\breve y(\tau))_{\tau = 0}^T : \breve x_M(T) = \hat x_M(t), \breve y(T) = y(t) \\ \breve l_M(T+1) = l_M(t+1), (\breve x_M, \breve l_M) \text{ generated by } \breve y \text{ and } u(\tau) = \mu^\star(\breve y(0), \ldots \breve y(\tau)) \Big \}.
	\end{multline*}
	Then $\mathcal T\left \{(\hat x(0), l_M(1), y(0)): M \in \mathcal M \right \}$ is nonempty since it is compatible with any trajectory of length $1$ such that $\breve y(0) = y(0)$. Fix $t \geq 0$ and observer states $\hat x_M(t), l_M(t+1)$ and measurement $y(t)$. Assume that $\mathcal T\left\{\hat x_M(t), l_M(t), y(t)): M \in \mathcal M\right \}$ is non empty. Then there exists a sequence $\breve y$, and final time $T$ so that $l_M(t+1) = \alpha_M(T)$ with $\alpha_M(t)$ as in \eqref{eq:alpha} generated by $\breve y$ and the controller $u(\tau) = \mu^\star(\breve y(0), \ldots, \breve y(\tau))$. By assumption, $l_M(t+1) = \alpha_M(T) \leq 0$. Taking 
	\begin{align*}
		\eta^\star\left\{(\hat x_M(t), l_M(t+1), y(t)):M\in \mathcal M \right\} = \mu^\star(\breve y), \\
	\end{align*}
		for some $\breve y, \in \mathcal T\left\{(\hat x_M(t) l_M(t+1), y(t)): M \in \mathcal M\right\}$ ensures that $\mathcal T$ will be nonempty the next time step. 
		By induction $\mathcal T$ will be nonempty for all $T\geq 0$ and thus $u$ is well defined and $l_M(T) \leq 0$ for all $T$.
\end{proof}

\section{Certainty equivalence control}
\label{sec:Certainty}
We will now leverage these results to synthesize a control policy for the case when the pole $a\in \R$ is known, $b = \pm 1$ and $c = 1$. Emboldened by Theorem~\ref{thm:observer-based-feedback} we will construct a simple observer-based supervisory controller in the following way: We will run two observers in parallel corresponding to the cases $b = \pm 1$. The supervisor will monitor the cumulative objectives $l_{-1}(t)$ and $l_1(t)$ and determine which observer and model to use for computing the control signal. The policy computes the control signal as if the selected model were true.  Let $i \in \{-1, 1\}$ index the observers. The Riccati equations~\eqref{eq:Riccati} reduce to
\begin{equation}
                P_i = P = \frac{1}{2} (1 - \gamma^2a^2) + \sqrt{\gamma^2(-1 + \gamma^2) + (\gamma^2a^2 -1)^2 / 4}.
		\label{eq:P}
\end{equation}
	Construct the observers $\hat x_i$ and cumulative objectives $l_i$ using \eqref{eq:xhati} and \eqref{eq:li} with $b_i = i$ and
\[
	\hat a_i = \hat a = \frac{aP}{P + \gamma^2 - 1}, \quad \hat g_i = \hat g = \frac{\gamma^2a}{P + \gamma^2 - 1}.
\]
Define the \emph{certainty-equivalence dead-beat controller} as the function
\begin{equation}
	u(t) = \begin{cases}
                -(\hat a\hat x_1(t) + \hat g y(t))  & \text{if}\ l_1(t+1) \geq l_{-1}(t+1) \\
		\hat a\hat x_{-1}(t)  + \hat g y(t)& \text{if}\ l_1(t+1) < l_{-1}(t+1).
        \end{cases}
        \label{eq:certainty_equivalence}
\end{equation}
The dead-beat controller\footnote{The controller is dead-beat for the observer state corresponding to the model with the hightest cumulative cost. The observers themselves are not dead-beat.} ensures that for every $t$, either $\hat x_1(t)$ or $\hat x_{-1}(t)$ will be zero. This simplifies the observer dynamics $\hat x$ and the cost associated with the history $l$. We summarize the properties in the following proposition.
\begin{proposition}
        \label{prop:prop}
	With $\hat a$, $\hat g$, $P$ as above, $\hat x_i$ and $l_i$ as in \eqref{eq:xhati} and \eqref{eq:li}, and the control signal given by \eqref{eq:certainty_equivalence}, let
        \[
                \hat x(t+1) = \hat a \hat x(t) + 2 \hat gy(t), \quad \hat x(0) = 0.
        \]
        Then the following is true:
	\begin{align}
			1: \quad  &
				\hat x_1(t) = \begin{cases}
				0, & \text{if } l_1(t) \geq l_{-1}(t) \\
				\hat x(t) , & \text{if } l_1(t) < l_{-1}(t)
	\end{cases}, 
	\quad
                \hat x_{-1}(t) = \begin{cases}
                        \hat x(t) , & \text{if } l_1(t) \geq l_{-1}(t) \\
                        0, & \text{if } l_1(t) < l_{-1}(t),
                \end{cases} \nonumber  \\
                                2: \quad &
                \begin{cases}
			l_1(t+1) & = \begin{cases}
                        l_1(t) - \gamma^2y(t)^2 + \frac{(\gamma^2 y(t))^2}{P + \gamma^2 - 1} & \text{if } l_1(t) \geq l_{-1}(t) \\
                        l_1(t) - P\hat x(t)^2 - \gamma^2y(t)^2 + \frac{(P\hat x(t) + \gamma^2 y(t))^2}{P + \gamma^2 - 1}, & \text{if } l_1(t) < l_{-1}(t)
                \end{cases} \\
			l_{-1}(t+1) &= \begin{cases}
                                l_{-1}(t) - P\hat x(t)^2 - \gamma^2y(t)^2 + \frac{(P\hat x(t) + \gamma^2 y(t))^2}{P + \gamma^2 - 1}, & \text{if } l_1(t) \geq l_{-1}(t) \\
                                l_{-1}(t) - \gamma^2y(t)^2 + \frac{(\gamma^2 y(t))^2}{P + \gamma^2 - 1}, & \text{if } l_1(t) < l_{-1}(t)
                \end{cases}
		\end{cases} \label{eq:l+}
\end{align}
\end{proposition}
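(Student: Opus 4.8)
The plan is to establish the two claims in order: the first (the observer identities) carries the real content and I would prove it by induction on $t$, after which the second (the recursions for $l_{\pm 1}$) follows by direct substitution into \eqref{eq:li}. The organizing idea is that the dead-beat control \eqref{eq:certainty_equivalence} forces one of the two observers to vanish at each step, while a control-free invariant pins down the value of the other.

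First I would record that invariant. Since $b_1 = +1$ and $b_{-1} = -1$, the control $u(t)$ enters the two observer recursions \eqref{eq:xhati} with opposite signs. Hence, writing $s(t) := \hat x_1(t) + \hat x_{-1}(t)$, the $u(t)$ terms cancel and $s(t+1) = \hat a\, s(t) + 2\hat g\, y(t)$ with $s(0) = 0$, which is exactly the recursion defining $\hat x(t)$. Therefore $\hat x_1(t) + \hat x_{-1}(t) = \hat x(t)$ for every $t$, independently of the control policy. I would also note that $l_1(t+1)$ and $l_{-1}(t+1)$ are determined by $l_{\pm1}(t)$, $\hat x_{\pm 1}(t)$ and $y(t)$ through \eqref{eq:li} and do \emph{not} depend on $u(t)$, so the branch test comparing $l_1(t+1)$ and $l_{-1}(t+1)$ in \eqref{eq:certainty_equivalence} is well defined at the instant $u(t)$ is chosen.

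For the first claim I induct on $t$. The base case $t=0$ is immediate: $\hat x_{\pm 1}(0) = \hat x(0) = 0$ and $l_1(0)=l_{-1}(0)$, so either branch gives the stated zeros. For the step, observe that the branch condition used by \eqref{eq:certainty_equivalence} at time $t$ is precisely the comparison of $l_1(t+1)$ and $l_{-1}(t+1)$ appearing in the first claim \emph{at time $t+1$}. In the branch $l_1(t+1)\ge l_{-1}(t+1)$, substituting $u(t) = -(\hat a\hat x_1(t) + \hat g y(t))$ into \eqref{eq:xhati} makes the $\hat x_1$-update cancel, so $\hat x_1(t+1) = 0$ (the dead-beat property); the sum invariant then forces $\hat x_{-1}(t+1) = \hat a\, \hat x(t) + 2\hat g\, y(t) = \hat x(t+1)$. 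The complementary branch is symmetric with the roles of $\pm 1$ exchanged. This is exactly the assertion of the first claim at $t+1$.

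Finally, the second claim follows by substitution. For each $M\in\{1,-1\}$ the recursion \eqref{eq:li} holds with $c = 1$ and $P_M = P$; inserting the value of $\hat x_M(t)$ supplied by the first claim --- either $0$, which collapses the quadratic terms to the measurement-only expression, or $\hat x(t)$, which reproduces the full expression --- yields \eqref{eq:l+}. The only point demanding care is the bookkeeping of the conditioning index: the control test is stated in terms of $l(\cdot+1)$ whereas the two claims are stated in terms of $l(\cdot)$, and one must check that these line up. They do, because \eqref{eq:certainty_equivalence} at time $t$ uses the $(t+1)$-test, which is the first claim's hypothesis at $t+1$, while the second claim at time $t$ merely reads off the first claim at the same time $t$. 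I expect this index alignment, rather than any computation, to be the only genuine obstacle.
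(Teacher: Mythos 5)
Your proof is correct and follows essentially the same route as the paper's: the dead-beat control zeroes the selected observer, the sum $\hat x_1(t) + \hat x_{-1}(t)$ satisfies the $\hat x$-recursion because the $b_{\pm 1} = \pm 1$ control terms cancel, and the second claim follows by substituting the first into \eqref{eq:li}. The paper's proof is terser (it leaves the induction and the control-independence of the sum implicit, writing only ``taking $\hat x(t) = \hat x_1(t) + \hat x_{-1}(t)$ completes the proof''), so your explicit treatment of the invariant and of the $l(\cdot)$ versus $l(\cdot+1)$ index alignment is a faithful, more detailed rendering of the same argument.
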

\begin{proof}
	We start by proving the first claim. Consider the case when $l_1(t+1) \geq l_{-1}(t+1)$. Then $\hat x_1(t+1) = 0$ and $\hat x_{-1}(t+1) = \hat a(\hat x_1(t) + \hat x_{-1}(t)) + 2\hat gy(t)$. The case when $l_1(t+1) < l_{-1}(t+1)$ is similar. Taking $\hat x(t) = \hat x_1(t) + \hat x_{-1}(t)$ completes the proof. To see that the second claim is true, note that if $l_1(t) \geq l_{-1}(t)$ then $\hat x_1(t) = 0$ and $\hat x_{-1}(t) = \hat x(t)$. The claim follows by substitution into \eqref{eq:li}.
\end{proof}

\subsection{Conditions for finite-gain stability}
This section determines sufficient conditions for the certainty-equivalence controller to guarantee a gain-bound of at most $\gamma$. We first give conditions on $l_1(t)$ and $l_{-1}(t)$ such that both quantities are negative for the next time step. We will then give conditions on $\gamma$ so that the negativity conditions hold for all $t$. We summarize the non-negativity conditions in the following Lemma.
\begin{lemma}
        \label{lemma:rec_cond}
	Given $P > 1$, $\gamma > 0$, $\hat x(t) \in \R$, $l_1(t)$ and $l_{-1}(t)$. Assume that $\max_{i\in\{-1, 1\}}l_i(t) \leq 0$ and that
        \[
                \min_il_i(t) \leq -\frac{P}{P-1}\hat x(t)^2.
        \]
	Then with $l_i(t+1)$ as in \eqref{eq:l+}, it holds that $l_i(t+1) \leq 0$ for $i\in \{1, -1\}$.
\end{lemma}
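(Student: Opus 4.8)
The plan is to treat the fresh measurement $y(t)$ as a free real variable and show that \emph{both} updated costs remain nonpositive for every choice of $y(t)$. By the symmetry of the recursion \eqref{eq:l+} under the swap $1 \leftrightarrow -1$, I would first reduce to the case $l_1(t) \geq l_{-1}(t)$ without loss of generality; the hypotheses then read $l_1(t) \leq 0$ and $l_{-1}(t) \leq -\tfrac{P}{P-1}\hat x(t)^2$, and only the first branch of each update in \eqref{eq:l+} is active. The symmetric case $l_{-1}(t) \geq l_1(t)$ is identical after relabelling.

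The bound $l_1(t+1) \leq 0$ is the easy step. In this branch the $\hat x(t)$-dependence vanishes, and collecting the $y(t)^2$ terms gives $l_1(t+1) = l_1(t) + \gamma^2 y(t)^2 \cdot \tfrac{1-P}{P+\gamma^2-1}$. Since $P>1$ forces $1-P<0$ while $P+\gamma^2-1>0$, the added term is nonpositive, so $l_1(t+1) \leq l_1(t) \leq 0$ for every $y(t)$.

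The substantive step is the bound $l_{-1}(t+1)\leq 0$. I would write the update as $l_{-1}(t) + q(\hat x(t), y(t))$, where $q$ is the quadratic form produced by expanding $(P\hat x(t)+\gamma^2 y(t))^2$ and dividing by $Q := P+\gamma^2-1$. The coefficient of $y(t)^2$ in $q$ equals $\tfrac{\gamma^2(1-P)}{Q}$, which is strictly negative because $P>1$; hence $q$ is concave in $y(t)$ and attains a finite maximum. Completing the square over the free variable $y(t)$ (the maximizer being $y(t)^\star = \tfrac{P}{P-1}\hat x(t)$), the key computation is that the maximal value collapses, after cancellations governed by the Riccati relation among $P$, $\gamma$ and $Q$, to exactly $\tfrac{P}{P-1}\hat x(t)^2$. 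Thus $\max_{y(t)} l_{-1}(t+1) = l_{-1}(t) + \tfrac{P}{P-1}\hat x(t)^2$, and the hypothesis $l_{-1}(t) \leq -\tfrac{P}{P-1}\hat x(t)^2$ immediately gives $l_{-1}(t+1) \leq 0$.

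The main obstacle is precisely this maximization: although it is elementary completion of the square, the content of the lemma is that the worst-case measurement injects exactly $\tfrac{P}{P-1}\hat x(t)^2$ of additional cost, an amount matched term-for-term by the slack assumed in $\min_i l_i(t)$. I would verify the algebraic cancellations that reduce the maximized quadratic to this clean threshold with care, since a sign error (especially in handling $1-P<0$ and the sign of $Q$) would invalidate the conclusion. Everything else is bookkeeping, and the $l_1$ estimate together with the symmetric relabelling finishes the argument.
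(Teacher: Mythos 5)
Your proposal is correct and follows essentially the same route as the paper's proof: reduce to one case by symmetry, note that the $l_1$ update only adds a nonpositive multiple of $y(t)^2$ since $P>1$, and maximize the concave quadratic $l_{-1}(t+1)$ over $y(t)$, where the completion of the square (with maximizer $y(t)^\star = \tfrac{P}{P-1}\hat x(t)$) indeed collapses to $\max_{y(t)} l_{-1}(t+1) = l_{-1}(t) + \tfrac{P}{P-1}\hat x(t)^2$, exactly matching the assumed slack. The paper carries out the same maximization via the substitution $X = P+\gamma^2-1$ rather than exhibiting the maximizer explicitly, but the computation and conclusion are identical.
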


\begin{proof}[Lemma~\ref{lemma:rec_cond}, full]
	We will give the proof for the case $0 \geq l_1(t) \geq l_{-1}(t)$. The case $0 \geq l_{-1}(t) \geq l_1(t)$ is similar. Note that $l_1(t+1)$ and $l_{-1}(t+1)$ are concave in $y(t)$ if and only if
        \[
        \frac{1}{\gamma^2} \geq \frac{1}{P + \gamma^2 - 1} \iff  P + \gamma^2 - 1 \geq  \gamma^2,
        \]
        and we conclude that $l_1(t+1)$ and $l_{-1}(t+1)$ are bounded from above if and only if $P \geq 1$. Secondly, we see that $ l_1(t+1) = l_1(t) -cy^2 \leq 0$ for some positive constant $c$. Finally, let $X = P + \gamma^2 - 1$ and consider
        \[
                \begin{aligned}
                        \max_{y(t)} l_{-1}(t+1) &=\max_{y(t)}\left\{ l_{-1}(t) - P\hat x(t)^2 - \gamma^{-2}\left(\gamma^2 y(t)\right)^2 + (P\hat x(t) + \gamma^2 y(t))^2 / X \right \} \\
                                & = \max_{y(t)}\left\{ l_{-1}(t) + \left(-\gamma^{-2} + X^{-1} \right)\left(\gamma^2 y(t) \right)^2 + 2X^{-1}P\hat x(t) \gamma^2y(t) - (P - P^2/X)\hat x(t) \right \} \\
                                & = l_{-1}(t) - \left( \frac{X^{-2}P^2}{-\gamma^{-2} + X^{-1}} + P - P^2/X \right)\hat x(t)^2 \\
                                & = l_{-1}(t) - \frac{\gamma^2P^2/X + P(\gamma^2 - X) - P^2/X (\gamma^2 - X)}{\gamma^2 - X}\hat x(t)^2 \\
                                & = l_{-1}(t) - \frac{P(\gamma^2 - X) + P^2}{\gamma^2 - X}\hat x(t)^2 \\
                                & = l_{-1}(t) - \frac{ P(1 - P) + P^2}{1 -P}\hat x(t)^2 \\
                                & = l_{-1}(t) + \frac {P}{P-1}\hat x(t)^2
                \end{aligned}
        \]
        Which is negative if and only if $l_{-1}(t) \leq -\frac{P}{P-1}\hat x(t)^2$.
\end{proof}

Next we give conditions on $\gamma$ so that the assumptions in Lemma~\ref{lemma:rec_cond} are fulfilled for all $t$. This is illustrated in Figure~\ref{fig:quadfuns}, where subfigure (a) illustrates a case where $l_1(t+1)$ and $l_{-1}(t+1)$ cannot simultaneously be greater than $-\frac{P}{P-1}\hat x(t+1)^2$ and subfigure (b) illustrates the case when the condition is not guaranteed to hold for the next time step. For values of $\gamma$ so that the system behaves as in Figure~\ref{fig:quadfuns} (a), if the assumptions are fulfilled for some $t$, then (by induction) they will be fulfilled for all $T \geq t$. This is formalized in the next theorem.
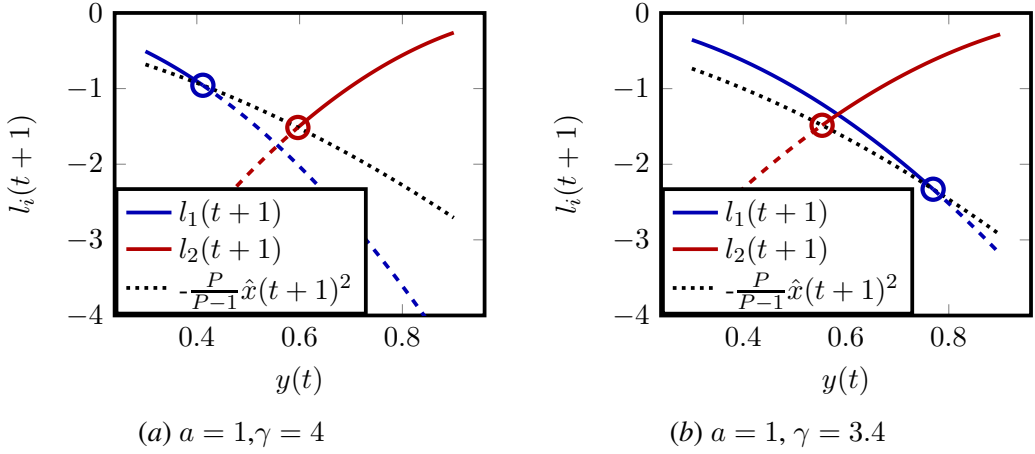
\begin{figure}
	\subfigure[{$a = 1$,$\gamma = 4$}]{%
		\def \g {4}
		\def \a {1}
\def \xh {1}
\def \ymi {0.3}
\def \yma {0.9}
\begin{tikzpicture}[%
	declare function={
		g = \g;
		a = \a;
		xh = \xh;
		P = -(g^2 * a^2 - 1)/2 + sqrt((g^2 * a^2 - 1)^2 / 4 + g^2 * (g^2 - 1));
		X = P + g^2 -1;
		Ghat = a * g^2 / X;
		Ahat = a * P / X;
		yf(\x) = -P / 2 / g^2 * xh + \x;
		i1 = yf(P * xh / 2  / (g^2 - 2 * g^2 / (P-1) * sqrt(g^2 - P)));
		i2 = yf(1 / 2 / g^2 * (P + 2 * g^2 - 1) * ((P - 1) - 2 * sqrt(g^2 - P)) / ((P + 1)^2 - 4 * g^2) * P * xh);
		f1y(\x) = (g^2 * \x)^2 / X - g^2 * \x^2;
		f2y(\x) = -P / (P-1) * xh^2 + (P*xh + g^2 * \x)^2 / X - P * xh^2 - g^2 * \x^2;
		f3y(\x) =-P / (P - 1) * (Ahat * xh + 2 * Ghat * \x)^2;
		f1 = f1y(i1);
		f2 = f2y(i2);
	},
	]
	\begin{axis}[%
			line width=0.5mm,
			width = 6.5cm,
			xlabel = {$y(t)$},
			ylabel = {$l_i(t+1)$},
			ymin = -4,
			ymax = 0,
			legend style={at={(0,0)},anchor=south west,legend cell align=left}
			]
		\addplot[black!30!blue, domain=\ymi:i1] {f1y(\x)};
		\addlegendentry{$l_1(t+1)$};
		\addplot[black!30!red, domain=i2:\yma] {f2y(\x)};
		\addlegendentry{$l_2(t+1)$};
		\addplot[domain=\ymi:\yma, dotted] {f3y(\x)};
		\addlegendentry{-$\frac{P}{P-1}\hat x(t+1)^2$};
		\addplot[black!30!blue, domain=i1:\yma, dashed] {f1y(\x)};
		\addplot[black!30!blue, mark=o, ultra thick, mark size = 4pt] coordinates{(i1,f1)};
		\addplot[black!30!red, domain=\ymi:i2, dashed] {f2y(\x)};
		\addplot[black!30!red, mark=o, ultra thick, mark size = 4pt] coordinates{(i2,f2)};
	\end{axis}
\end{tikzpicture}
		}
	\subfigure[$a = 1$, $\gamma = 3.4$]{%
		\def \g {3.4}
		\def \a {1}
\def \xh {1}
\def \ymi {0.3}
\def \yma {0.9}
\begin{tikzpicture}[%
	declare function={
		g = \g;
		a = \a;
		xh = \xh;
		P = -(g^2 * a^2 - 1)/2 + sqrt((g^2 * a^2 - 1)^2 / 4 + g^2 * (g^2 - 1));
		X = P + g^2 -1;
		Ghat = a * g^2 / X;
		Ahat = a * P / X;
		yf(\x) = -P / 2 / g^2 * xh + \x;
		i1 = yf(P * xh / 2  / (g^2 - 2 * g^2 / (P-1) * sqrt(g^2 - P)));
		i2 = yf(1 / 2 / g^2 * (P + 2 * g^2 - 1) * ((P - 1) - 2 * sqrt(g^2 - P)) / ((P + 1)^2 - 4 * g^2) * P * xh);
		f1y(\x) = (g^2 * \x)^2 / X - g^2 * \x^2;
		f2y(\x) = -P / (P-1) * xh^2 + (P*xh + g^2 * \x)^2 / X - P * xh^2 - g^2 * \x^2;
		f3y(\x) =-P / (P - 1) * (Ahat * xh + 2 * Ghat * \x)^2;
		f1 = f1y(i1);
		f2 = f2y(i2);
	},
	]
	\begin{axis}[%
			line width=0.5mm,
			width = 6.5cm,
			xlabel = {$y(t)$},
			ylabel = {$l_i(t+1)$},
			ymin = -4,
			ymax = 0,
			legend style={at={(0,0)},anchor=south west,legend cell align=left}
			]
		\addplot[black!30!blue, domain=\ymi:i1] {f1y(\x)};
		\addlegendentry{$l_1(t+1)$};
		\addplot[black!30!red, domain=i2:\yma] {f2y(\x)};
		\addlegendentry{$l_2(t+1)$};
		\addplot[domain=\ymi:\yma, dotted] {f3y(\x)};
		\addlegendentry{-$\frac{P}{P-1}\hat x(t+1)^2$};
		\addplot[black!30!blue, domain=i1:\yma, dashed] {f1y(\x)};
		\addplot[black!30!blue, mark=o, ultra thick, mark size = 4pt] coordinates{(i1,f1)};
		\addplot[black!30!red, domain=\ymi:i2, dashed] {f2y(\x)};
		\addplot[black!30!red, mark=o, ultra thick, mark size = 4pt] coordinates{(i2,f2)};
	\end{axis}
\end{tikzpicture}
		}
	\caption{
		Illustrations of $l_1(t+1)$, $l_{-1}(t+1)$ and $-\frac{P}{P-1}\hat x(t+1)$ when $l_1(t) = 0$, $l_{-1}(t) = -\frac{P}{P-1}\hat x(t)^2$. The solid lines highlight the values of $y(t)$ where $l_i(t+1) \geq -\frac{P}{P-1}\hat x(t+1)^2$. We see that in (a) the solid lines do not overlap, i.e. given that the assumptions of Lemma~\ref{lemma:rec_cond} are fulfilled for some $t$, they will be fulfilled the next time step as well. In (b) the solid lines overlap, i.e. there are values for $y(t)$ so that the assumptions are violated the next time step.}
        \label{fig:quadfuns}
\end{figure}

\begin{theorem}[Certainty equivalence, upper bound]
	\label{thm:Certainty}
        Given a real number $a$ and a quantity $\gamma > 0$. Assume that
        \[
                P = \frac{1}{2} (1 - \gamma^2a^2) + \sqrt{\gamma^2(-1 + \gamma^2) + (\gamma^2a^2 -1)^2 / 4} > 1.
        \]
        If $P$ and $\gamma$ fulfill the \emph{curvature condition}~\eqref{eq:curvature_condition} and \emph{strong negativity condition}~\eqref{eq:strong_negativity} below, then the closed-loop system \eqref{eq:sys} controlled with the certainty-equivalence deadbeat controller \eqref{eq:certainty_equivalence} has gain from $(w, v) \to x$ bounded above by $\gamma$.
        \begin{equation}
                P > 2\gamma - 1
                \label{eq:curvature_condition}
        \end{equation}
        \begin{equation}
                (P + 2 \gamma^2 - 1) \left( P - 1 - 2\sqrt{\gamma^2 - P})^2\right) \geq (P-1)\left((P + 1)^2 - 4\gamma^2 \right)
                \label{eq:strong_negativity}
        \end{equation}
\end{theorem}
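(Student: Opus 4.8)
The plan is to reduce everything to Theorem~\ref{thm:full_state}. Since the certainty-equivalence controller \eqref{eq:certainty_equivalence} is a causal output-feedback policy and the Riccati equation \eqref{eq:Riccati} admits the positive solution \eqref{eq:P} (with $P>1$ by assumption), it suffices to show that $l_i(t+1)\le 0$ for every $t\ge 0$, every $y\in\ell_2[0,t]$ and both $i\in\{-1,1\}$. I would establish this by induction on $t$, carrying the stronger invariant which is exactly the hypothesis of Lemma~\ref{lemma:rec_cond}:
\[
	\max_{i}l_i(t)\le 0 \quad\text{and}\quad \min_i l_i(t)\le -\frac{P}{P-1}\hat x(t)^2 .
\]
The base case $t=0$ is immediate, since $l_1(0)=l_{-1}(0)=0$ and $\hat x(0)=0$ make both parts hold with equality.

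For the inductive step, assume the invariant at time $t$. Lemma~\ref{lemma:rec_cond} then gives $l_i(t+1)\le 0$ for both $i$, which is the conclusion I ultimately want and simultaneously reestablishes the first half of the invariant at $t+1$. The real work is propagating the second half, i.e. proving $\min_i l_i(t+1)\le -\frac{P}{P-1}\hat x(t+1)^2$ for every $y(t)$; this is precisely the non-overlap depicted in Figure~\ref{fig:quadfuns}(a). Writing $g_i(y):=l_i(t+1)+\frac{P}{P-1}\hat x(t+1)^2$ with $l_i(t+1)$ given by \eqref{eq:l+} and $\hat x(t+1)=\hat a\hat x(t)+2\hat gy$, the goal is equivalent to showing that $\{g_1\ge 0\}$ and $\{g_{-1}\ge 0\}$ are disjoint. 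Because each $g_i$ increases pointwise in $l_i(t)$, enlarging $\{g_i\ge 0\}$, the hardest configuration is the extreme one permitted by the invariant; by the symmetry between the two models I may take $l_1(t)=0$ and $l_{-1}(t)=-\frac{P}{P-1}\hat x(t)^2$, matching the setting in the caption of Figure~\ref{fig:quadfuns}.

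The two hypotheses then enter as follows. The $y^2$-coefficients of $l_1(t+1)$ and $l_{-1}(t+1)$ coincide and the term $\frac{P}{P-1}\hat x(t+1)^2$ is common, so $g_1$ and $g_{-1}$ share one leading coefficient. Substituting the identity $Pa^2\gamma^2=(\gamma^2-P)X$ with $X=P+\gamma^2-1$, obtained from \eqref{eq:Riccati} at $c=1$ (equivalently $a^2=(\gamma^2-P)X/(P\gamma^2)$, which already forces $\gamma^2\ge P$), this coefficient is nonpositive exactly when $(P+1)^2\ge 4\gamma^2$, i.e. exactly under the curvature condition \eqref{eq:curvature_condition}. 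Hence \eqref{eq:curvature_condition} renders both $g_i$ concave, so each $\{g_i\ge 0\}$ is a closed interval. It then remains to compare the right endpoint of $\{g_1\ge 0\}$ with the left endpoint of $\{g_{-1}\ge 0\}$ (the cross term $+2\gamma^2P\hat x\,y/X$ in $l_{-1}(t+1)$ shifts the latter interval rightward, consistent with the figure). Solving the two concave quadratics for these endpoints, inserting $a=\sqrt{(\gamma^2-P)X/(P\gamma^2)}$ — the source of the $\sqrt{\gamma^2-P}$ terms — and requiring that the right endpoint of $\{g_1\ge 0\}$ not exceed the left endpoint of $\{g_{-1}\ge 0\}$ should collapse, after clearing denominators, to the strong negativity condition \eqref{eq:strong_negativity}. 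This gives disjointness, hence $\min_i l_i(t+1)\le -\frac{P}{P-1}\hat x(t+1)^2$, closing the induction.

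The main obstacle is this final algebraic step: writing the two interval endpoints explicitly and checking that their ordering is equivalent to \eqref{eq:strong_negativity}. The bookkeeping is heavy because $\hat a$, $\hat g$ and the endpoints all carry $a$-dependence through the Riccati relation, and one must track signs carefully when clearing the denominator $(P+1)^2-4\gamma^2$ (positive under \eqref{eq:curvature_condition}) and the factor $P-1-2\sqrt{\gamma^2-P}$ upon cross-multiplying. I would also need to pin down the two secondary points used above: that maximizing $l_i(t)$ within the invariant is genuinely the worst case for disjointness, and that (up to the symmetry $\hat x\mapsto-\hat x$, $y\mapsto-y$ that lets me assume $\hat x\ge 0$) it is the right endpoint of $\{g_1\ge 0\}$ and the left endpoint of $\{g_{-1}\ge 0\}$, rather than the reverse pairing, that bind. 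Once \eqref{eq:curvature_condition} and \eqref{eq:strong_negativity} are shown to force disjointness, the induction yields $l_i(t+1)\le 0$ for all $t$, $y$ and $i$, and Theorem~\ref{thm:full_state} converts this into the claimed $\ell_2$-gain bound of $\gamma$ from $(w,v)$ to $x$.
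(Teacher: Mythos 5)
Your proposal follows essentially the same route as the paper's own proof: the same induction carrying the invariant of Lemma~\ref{lemma:rec_cond}, the same reduction to the extreme configuration $l_1(t)=0$, $l_{-1}(t)=-\frac{P}{P-1}\hat x(t)^2$, concavity of the shifted quadratics from the curvature condition~\eqref{eq:curvature_condition}, and a comparison of interval endpoints (via the Riccati identity eliminating $a$) that collapses to the strong negativity condition~\eqref{eq:strong_negativity}; the paper merely streamlines the algebra you deferred by substituting $z(t)=y(t)-\frac{P}{2\gamma^2}\hat x(t)$, which makes $\hat x(t+1)=2\hat g z(t)$. One small correction: the goal is not equivalent to disjointness of the closed sets $\{g_1\ge 0\}$ and $\{g_{-1}\ge 0\}$ --- they may share exactly one point where both vanish (which is what happens when \eqref{eq:strong_negativity} holds with equality), so the correct requirement, as in the paper, is that the intersection contain at most one point.
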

\begin{remark}
	We can solve \eqref{eq:strong_negativity} with equality restricted to the domain $P > 2\gamma -1$. The resulting $\gamma$ satisfies $(|a| + \sqrt{a^2 + 1})\sqrt{a^2 + 1} \leq \gamma \leq 2.1a^2 + 2$, and is shown in Figure~\ref{fig:gammascaling}.
\end{remark}
	\begin{remark}
		In \cite{Vinnicombe2004}, Vinnicombe studied the state-feedback version of the problem and found that the bound $\gamma = |a| + \sqrt{a^2 + 1}$ is achieved by the control policy
		\[
			u(t) = \begin{cases}
				ax(t),& \text{if } \alpha_1(t) \leq \alpha_{-1}(t) \\
				-ax(t),&\text{else},
			\end{cases}
		\]
		where $\alpha_b(t) = \sum_{\tau \leq t-1}\left( x(\tau + 1) - ax(\tau) - bu(\tau) \right)^2$. If we apply this control policy to the noisy measurements $y(t) = x(t) + v(t)$ we have that $x(t+1) = ax(t) + bu(t) + w(t) \pm av(t)$, and we get $\|x\|_2 \leq \gamma \|\bmat{1 & a}(w,v)\|_2 \leq (|a| + \sqrt{1+a^2})\sqrt{1 + a^2}\|(w,v)\|_2$ which is the lower bound in Figure~\ref{fig:gammascaling}. 
	\end{remark}
\begin{proof}[Theorem~\ref{thm:Certainty}, full]
	By assumption $P > 1$ is positive so Theorem~\ref{thm:full_state} applies. We will show that if the \emph{curvature condition} and the \emph{strong negativity condition} are fulfilled, then the assumptions in Lemma~\ref{lemma:rec_cond} will hold for all $t$. Then, by Theorem~\ref{thm:observer-based-feedback} the observer-based controller is finite-gain for the original system. For $t = 0$, we have that $l_i(0) = 0$, $\hat x(0) = 0$ and that $l_{i}(t) \leq -\frac{P}{P-1}\hat x(0)^2$ holds trivially. Fix $t \geq 0$, assume without loss of generality that $0 \geq l_1(t) \geq l_{-1}(t)$ and that $l_{-1}(t) \leq - \frac{P}{P-1}\hat x(t)^2$. By Lemma~\ref{lemma:rec_cond} $\max_i \{l_i(t+1) \} \leq 0$. It remains to show that
        \begin{equation}
                \min_i\{l_i(t+1)\} \leq -\frac{P}{P-1}\hat x(t+1)^2.
                \label{eq:min_cond}
        \end{equation}

        Let $z(t) := y(t) - \frac{P}{2\gamma^2}\hat x(t)$. Then $\hat x(t+1) = 2\hat g z(t)$ and using Proposition~\ref{prop:prop}, letting $X = P + \gamma^2 - 1$ we have
        \begin{align*}
                l_1(t+1) & = l_1(t) + \left(-\frac{P\hat x(t)}{2} + \gamma^2 z(t) \right)^2(1 / X - 1/\gamma^2) \\
                l_{-1}(t+1) &  = l_{-1}(t) +    \left(\frac{P\hat x(t)}{2} + \gamma^2 z(t) \right)^2 / X - \left(-\frac{P\hat x(t)}{2} + \gamma^2 z(t) \right)^2 / \gamma^2 - P\hat x(t)^2
        \end{align*}
        \paragraph{Curvature:} For \eqref{eq:min_cond} to be true for all $z(t)\in \R$ it is necessary that $l_i(t+1) + 4\frac{P}{P-1}\hat g^2 z(t)^2 $ is concave in $z(t)$. This is the case if and only if
        \begin{align}
                \gamma^4(1/X - 1/\gamma^2)  & \leq -4\frac{P}{P-1}\hat g^2 \label{eq:concavity}\\
                \iff \quad \gamma^4 & \geq -4\frac{P}{P-1}\frac{1}{1 / X - 1 / \gamma^2}\hat g^2 \nonumber
        \end{align}
        Insert $\hat g = \gamma^2 a^2 / X$  to get
        \[
                -4\frac{P}{P-1}\frac{1}{1 / X - 1 / \gamma^2}\hat g^2 = 4\frac{P}{P-1}\frac{\gamma^2 X}{X - \gamma^2}\hat g^2 = \frac{4P}{(P-1)^2} \gamma^2 a^2 / X \gamma^4.
        \]
        Further, insert
        \[
                P = \frac{1}{a^2 / X + \gamma^{-2}} \iff \frac{a^2}{X} = \frac{1}{P} - \gamma^{-2}
        \]
        to get
        \begin{equation}
                -4\frac{P}{P-1}\frac{1}{1 / X - 1 / \gamma^2}\hat g^2 = 4 \frac{\gamma^2 - P}{(P-1)^2}\gamma^4.
                \label{eq:simplified_rhs}
        \end{equation}
        The concavity condition \eqref{eq:concavity} simplifies to the \emph{curvature condition} \eqref{eq:curvature_condition},
        \begin{align*}
                1 \geq 4 \frac{\gamma^2 - P}{(P-1)^2} \iff (P+1)^2 \geq 4 \gamma^2 \iff \quad P  \geq 2\gamma - 1.
        \end{align*}
        \paragraph{Strong negativity:}
        Define the upper bounds
        \begin{align*}
                \bar l_1(t+1) & := \left(-\frac{P\hat x(t)}{2} + \gamma^2 z(t) \right)^2(1 / X - 1/\gamma^2) \\
                \bar l_{-1}(t+1) &  := -\frac{P}{P-1}\hat x(t)^2 +  \left(\frac{P\hat x(t)}{2} + \gamma^2 z(t) \right)^2 / X - \left(-\frac{P\hat x(t)}{2} + \gamma^2 z(t) \right)^2 / \gamma^2 - P\hat x(t)^2.
        \end{align*}
        Also define the sets
        \[
                \mathcal I_i := \left\{z \in \R : l_i(t+1) \geq -4\frac{P}{P-1}\hat g^2 z(t)^2\right \}.
        \]
	and $\bar{\mathcal I}_i$ anagolously. Then the inequality~\eqref{eq:min_cond} is satisfied if and only if $\#\left(\mathcal I_1 \cap \mathcal I_{-1}\right) \leq 1$. Since $\bar l_i \geq l_i$ we have that $\mathcal I_i \subseteq \bar{\mathcal I}_i$, and a sufficient condition is that they intersection contains at most one point, i.e. $\#\left(\bar{\mathcal I}_1 \cap \bar{\mathcal I}_{-1}\right) \leq 1$. The reason we allow for the intersection to contain one point, is that at such a point both $l_1(t+1)$ and $l_{-1}(t+1)$ fulfills \eqref{eq:min_cond} with equality.  We will start with characterizing $\bar{\mathcal I}_1$ by looking for the solutions to $\bar l_1(t+1) = -4 \frac{P}{P-1}\hat g^2 z(t)^2$:
        \[
        \begin{split}
		& \left(-\frac{P\hat x(t)}{2} + \gamma^2 z(t) \right)^2(1 / X - 1/\gamma^2)  = -4 \frac{P}{P-1}\hat g^2 z(t)^2 \\
		\iff &\left(- \frac{P\hat x(t)}{2} + \gamma^2z(t)\right)^2  = 4 \frac{\gamma^2 - P}{(P-1)^2}(\gamma^2 z(t))^2 \\
		\iff &\left(- \frac{P\hat x(t)}{2} + \gamma^2\left(1 + 2 \frac{\sqrt{\gamma^2 - P}}{P-1}\right) z(t)\right)\left(- \frac{P\hat x(t)}{2} + \gamma^2\left(1 - 2 \frac{\sqrt{\gamma^2 - P}}{P-1}\right) z(t)\right)  = 0
        \end{split}
\]
        We conclude that for positive $\hat x(t)$
        \[
                \bar{\mathcal I}_1 = \left[\frac{P}{2\gamma^2} \left(1 + 2 \frac{\sqrt{\gamma^2 - P}}{P-1}\gamma^2 z(t)^2\right)^{-1}\hat x(t), \frac{P}{2\gamma^2} \left(1 - 2 \frac{\sqrt{\gamma^2 - P}}{P-1}\gamma^2 z(t)^2\right)^{-1}\hat x(t) \right].
        \]

        We continue with the solutions to $\bar l_2(t+1) = - 4\frac{P}{P-1}\hat g^2 z(t)^2$.
       \begin{multline*}
-\frac{P}{P-1}\hat x(t)^2 +  \left(\frac{P\hat x(t)}{2} + \gamma^2 z(t) \right)^2 / X - \left(-\frac{P\hat x(t)}{2} + \gamma^2 z(t) \right)^2 / \gamma^2 - P\hat x(t)^2  \\=-4 \frac{P}{P-1}\hat g^2 z(t)^2
	\end{multline*}	
        Using \eqref{eq:simplified_rhs} we get

	\begin{align*}
		\iff & \left(\frac{1}{X} - \frac{1}{\gamma^2} \right)\left( 1 - 4\frac{\gamma^2 - P}{(P-1)^2} \right) \left(\gamma^2 z(t) \right)^2 + \left( \frac{1}{X} + \frac{1}{\gamma^2}\right)P\hat x(t) \gamma^2 z(t) \\
		& + \left(\frac{1}{4} \left( \frac{1}{X} - \frac{1}{\gamma^2} \right) - \frac{1}{P-1} \right) \left (P\hat x(t) \right)^2 = 0 \\
		\iff & \left( z(t) \right)^2 - \frac{X + \gamma^2}{X - \gamma^2}\frac{(P-1)^2}{(P-1)^2 - 4(\gamma^2 - P)}P\hat x(t) \gamma^2 z(t) \\
		& + \frac{\frac{1}{4} - \frac{1}{P-1}\frac{1}{1 / X - 1 / \gamma^2}}{(P-1)^2 - 4(\gamma^2 - P)}(P-1)^2 P^2 \hat x(t)^2 = 0\\
		\iff & \left( \gamma^2 z(t) \right)^2 - \frac{(P + 2\gamma^2 - 1)(P-1)}{(P+1)^2 - 4 \gamma^2}P\hat x(t) \gamma^2 z(t) \\
		& + \frac{1}{4}\frac{(P-1)^2 + 4\gamma^2(P + \gamma^2 - 1)}{(P+1)^2 - 4\gamma^2} P^2 \hat x(t)^2 = 0 \\
		\iff & \left(\gamma^2 z(t) - \frac{1}{2}\frac{(P + 2\gamma^2 - 1)(P-1)}{(P+1)^2 - 4\gamma^2}P\hat x(t) \right)^2 \\
		& - (P + 2\gamma^2 - 1)^2\frac{\gamma^2 - P}{\left((P+1)^2 - 4\gamma^2\right)^2} P^2 \hat x(t)^2 = 0
	\end{align*}

        which has the solutions
        \[
                z(t) = \frac{1}{2\gamma^2}(P + 2\gamma^2 - 1)\frac{P - 1 \pm 2\sqrt{\gamma^2 - P}}{(P+1)^2 - 4\gamma^2}P\hat x(t).
        \]
        Thus for positive $\hat x(t)$,
        \begin{multline*}
                \bar{\mathcal I}_{-1} = \Bigg[ \frac{1}{2\gamma^2}(P + 2\gamma^2 - 1)\frac{P - 1 - 2\sqrt{\gamma^2 - P}}{(P+1)^2 - 4\gamma^2}P\hat x(t), \\
                \frac{1}{2\gamma^2}(P + 2\gamma^2 - 1)\frac{P - 1 + 2\sqrt{\gamma^2 - P}}{(P+1)^2 - 4\gamma^2}P\hat x(t)\Bigg]
        \end{multline*}
	From the definition, it is clear that the vertex of $\bar l_1(t+1)$ lies closer to the origin, than that of $\bar l_{-1}(t+1)$. Thus $\#\left(\bar{\mathcal I}_1 \cap \bar{\mathcal I}_2\right) \leq 1$ is equivalent to
        \[
                \frac{P}{2\gamma^2} \left(1 - 2 \frac{\sqrt{\gamma^2 - P}}{P-1}\gamma^2 z(t)^2\right)^{-1}\hat x(t) \leq \frac{1}{2\gamma^2}(P + 2\gamma^2 - 1)\frac{P - 1 - 2\sqrt{\gamma^2 - P}}{(P+1)^2 - 4\gamma^2}P\hat x(t),
        \]
	which simplifies to \eqref{eq:strong_negativity}. The case when $\hat x(t)$ is negative is similar.
\end{proof}

\begin{figure}
        \centering
        \begin{tikzpicture}
	\begin{scope}
	\begin{axis}[%
			xlabel = {$a$},
			ylabel = {$\gamma$},
			xmin = -6,
			xmax = 6,
			width = 6cm,
			line width=0.5mm,
			scale=1.0,
			]

			\addplot[line width=0.7mm, black] table[x=x, y=gamma, col sep=comma] {figures/gammas.csv};
			\addplot[domain=-10:10, variable=\x, black!30!blue, dotted] plot ({\x}, {2.1*\x*\x + 2});
			\addplot[domain=-10:10, variable=\x, black!30!red, dashed] plot ({\x}, {(abs(\x) + sqrt(\x*\x + 1))*sqrt(\x*\x + 1)});
			\draw[black] (axis cs:-1.5, 0) rectangle (axis cs:1.5, 7);
	\end{axis}
	\end{scope}

	\begin{scope}[xshift=6cm]
	\begin{axis}[%
			xlabel = {$a$},
			ylabel = {$\gamma$},
			xmin = -1,
			xmax = 1,
			ymin = .3,
			width = 6cm,
			line width=0.5mm,
			]

			\addplot[line width=0.7mm, black] table[x=x, y=gamma, col sep=comma] {figures/gammas.csv};
			\addplot[domain=-2:2, variable=\x, black!30!blue, dotted] plot ({\x}, {2.1*\x*\x + 2});
			\addplot[domain=-2:2, variable=\x, black!30!red, dashed] plot ({\x}, {(abs(\x) + sqrt(\x*\x + 1))*sqrt(\x*\x + 1)});

			\node (gam) at (axis cs:-0.5, 4)  {$\gamma$};
			\node[color = black!30!blue] (gamhi) at (axis cs:0.5, 4) {$2.1 a^2 + 2$};
			\node[color = black!30!red] (gamlo) at (axis cs:-0, 0.7) {$(|a| + \sqrt{a^2 + 1})\sqrt{a^2 + 1}$};

			\draw[black, -latex, line width=0.3mm] (axis cs:-.7,2.6) -- (gam);
			\draw[black!30!blue, dotted, -latex, line width=0.3mm] (axis cs:.7,3.1) --(gamhi);
			\draw[black!30!red, dashed, -latex, line width=0.3mm, bend right] (axis cs:.6,1.9) to [out=30, in=200] ([xshift = 7mm, yshift=-2mm]gamlo.north);
	\end{axis}
	\end{scope}

	\draw[black, thick, -] (2.2,.25) -- (6,0);
	\draw[black, thick, -] (2.2,.53) -- (6,3.61);
\end{tikzpicture}
	\caption{Guaranteed bound on the $\ell_2$-gain from disturbances to error under feedback with the certainty equivalence controller with respect to $a$. We note that experimentally $\gamma$ is lower bounded by $(|a| + \sqrt{a^2 + 1})\sqrt{a^2 + 1}$ and upper bounded by $\leq 2.1a^2 + 2$. The lower bound becomes tighter as $a$ increases.}
        \label{fig:gammascaling}
\end{figure}
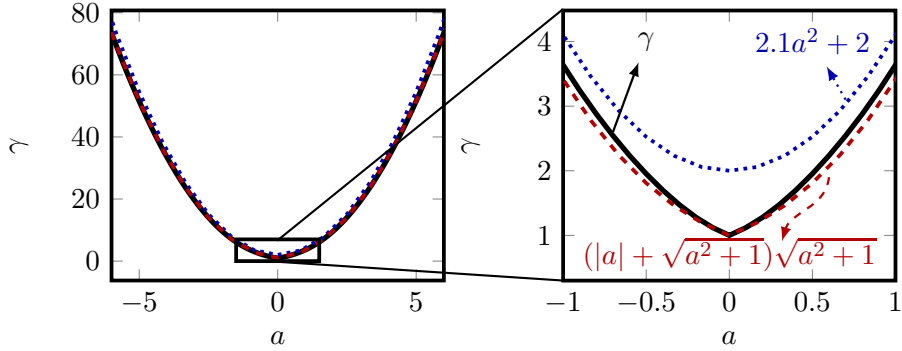

\section{Conclusions}
	This article presents a constructive approach to accounting for worst-case models of measurement noise, disturbance and uncertain parameters in controller design. In particular Theorem~\ref{thm:observer-based-feedback} shows that it is necessary and sufficient to consider feedback from the current states of a finite set of observers and cumulative performance measures. The performance measures compress the history allowing the controller to learn from past data. In Section~\ref{sec:Certainty}, we used this constructive approach to extend the results of \cite{Vinnicombe2004} to the case of noisy measurements. We focused on scalar systems, but Theorems \ref{thm:full_state} and \ref{thm:observer-based-feedback} can easily be extended to MIMO systems. In particular, we are excited about the potential in extending Minimax Adaptive Control~\cite{Rantzer2021} to the output feedback case.
\acks{This project has received funding from the European Research Council (ERC) under the European Union's Horizon 2020 research and innovation programme under grant agreement No 834142 (ScalableControl). The authors are thankful to their colleagues Bo Bernhardsson and Venkatraman Renganathan (Department of Automatic Control, Lund University) for help in reviewing and revising earlier versions of the manuscript.}

\appendix
\bibliography{references}

\end{document}